\documentclass[11pt,a4paper]{amsart}
\usepackage{amssymb,amscd,amsmath, young, stmaryrd}
\usepackage{mathrsfs, mathdots} 
\usepackage[mathcal]{eucal} 
\usepackage{float}
\usepackage[usenames]{color}
\usepackage{soul}
\addtolength{\textheight}{2cm}
\addtolength{\textwidth}{3cm}
\addtolength{\voffset}{-1.5cm}
\addtolength{\hoffset}{-1.5cm}

\linespread{1.1}

  \theoremstyle{plain}
  \newtheorem{thm}{Theorem}[section]
  
  \newtheorem{pro}[thm]{Proposition}
  \newtheorem{cor}[thm]{Corollary}
  
  \newtheorem{thmABC}{Theorem}

  \newtheorem*{con*}{Conjecture}
  
  \newtheorem{con}[thmABC]{Conjecture}

  \theoremstyle{remark}

  \newtheorem{dfn}[thm]{Definition}
  \newtheorem*{acknowledgements}{Acknowledgements}

  \numberwithin{equation}{section}
  \numberwithin{table}{section}

\newcommand{\N}{\mathbb{N}}

\newcommand{\Z}{\mathbb{Z}}

\newcommand{\iset}{\mathcal{I}}

\renewcommand{\epsilon}{\varepsilon}
\newcommand{\vf}{\varphi}

\renewcommand{\phi}{\varphi}
\renewcommand{\theta}{\vartheta}

\newcommand{\SR}{S^{R^c} _N}
\newcommand{\ISR}{IS^R _N}
\newcommand{\Sr}{S_\rho}

\newcommand{\uu}{^{-1}}

\DeclareMathOperator{\nset}{Neg}
\DeclareMathOperator{\DNeg}{DNeg}
\DeclareMathOperator{\stc}{stc}
\DeclareMathOperator{\invv}{inv}
\DeclareMathOperator{\stdr}{std}
\DeclareMathOperator{\istd}{istd}
\DeclareMathOperator{\mstc}{mstc}
\DeclareMathOperator{\nneg}{neg}
\DeclareMathOperator{\sy}{\pi}
\DeclareMathOperator{\nmaj}{nmaj}
\DeclareMathOperator{\nsp}{nsp}
\DeclareMathOperator{\ndes}{ndes}
\DeclareMathOperator{\nstc}{nstc}
\DeclareMathOperator{\dstc}{dstc}
\DeclareMathOperator{\ddes}{ddes}
\DeclareMathOperator{\dmaj}{dmaj}
\DeclareMathOperator{\sta}{st}

\DeclareMathOperator{\maj}{maj}
\DeclareMathOperator{\des}{des}

\DeclareMathOperator{\code}{code}
\DeclareMathOperator{\Des}{Des}

\author{Angela Carnevale} \address{Fakult\"at f\"ur Mathematik,
  Universit\"at Bielefeld, D-33501 Bielefeld, Germany}
\email{acarneva1@math.uni-bielefeld.de}

 \begin{document}
 \title{On some Euler-Mahonian distributions} \date{\today}

 \begin{abstract} 
 We prove that the pair of statistics $(\des,\maj)$ on multiset permutations is equidistributed with the pair $(\stc,\invv)$ on certain quotients of the symmetric group. We define the analogue of the statistic $\stc$ on multiset permutations, whose joint distribution with the inversions equals that of $(\des,\maj)$. We extend the definition of the statistic $\stc$ to hyperoctahedral and  even hyperoctahedral groups. Such functions, together with the Coxeter length, are equidistributed, respectively, with (ndes,nmaj) and (ddes,dmaj).
\end{abstract}
\maketitle

\thispagestyle{empty}
\section{introduction}
The first result about the enumeration of multiset permutations with respect to statistics now called \emph{descent number} and \emph{major index} is due to MacMahon. Let $\rho=(\rho_1,\ldots,\rho_m)$ be a composition of $N \in \N$. We denote by $\Sr$ the set of all permutations of the multiset $\{{1}^{\rho_1},\ldots,{m}^{\rho_m}\}$. The
  \emph{descent set} $\Des(w)$ of $w =  w_1\cdots w_N\in \Sr$ is
  $\Des(w)=\{i \in [N-1] \mid w_i>w_{i+1}\}$. The descent and major index
  statistics on $S_\rho$ are 
$$\des(w)=|\Des(w)| \quad\textup{ and }\quad\maj(w)=\sum_{i\in\Des(w)} i.$$  Then (\cite[\textsection 462, Vol.~2, Ch.~IV,
  Sect.~IX]{MacMahon/16}):
  \begin{equation}\label{eq:mmpart}
    \sum_{k\geq 0}\left(\prod_{i=1}^m\binom{\rho_j +k}{k}_q\right) x^k=\frac{\sum_{w \in \Sr}{x^{\des(w)}q^{\maj(w)}}}{\prod_{i=0}^{N}(1-xq^i)} \in \Z[q][[x]],
  \end{equation}
  where, for $n, k \in \N$ we put
  \[\binom{n}{k}_p :=\frac{[n]_p!}{[n-k]_p![k]_p !}, \qquad [n]_p:=\sum_{i=0}^{n-1} p^i, \qquad [n]_p!:=\prod_{i=1}^n [i]_p \quad\mbox{and}\quad [0]_p!:=1.\] 
 The well known result about the equidistribution, on multiset permutations, of the \emph{inversion number} with the major index, goes also back to MacMahon; Foata and  Sch{\"u}tzenberger \cite{FoataSch/78} proved that such equidistribution refines, in the case of the symmetric group, to inverse descent classes.
A pair of statistics that is equidistributed with $(\des,\maj)$ is called Euler-Mahonian.
In \cite{Skandera/01} Skandera introduced an Eulerian statistic, which he called $\stc$, on the symmetric group, and proved that the pair $(\stc,\invv)$ is Euler-Mahonian.

In this note we prove that the joint distribution of $(\stc,\invv)$ on certain quotients of the symmetric group is indeed the same as the distribution of $(\des,\maj)$ on multiset permutations; we use such result to define a statistic $\mstc$ that is Eulerian on multiset permutations and that, together with $\invv$ is equidistributed with the pair $(\des,\maj)$.

The Eulerian polynomial is (essentially) the descent polynomial on the symmetric group~$S_n$. Frobenius proved (see \cite{Frobenius/1910}) that such polynomial has real, simple, negative roots, and that $-1$ features as a root if and only if $n$ is even. Simion proved later that the descent polynomials of permutations of any multiset are also real rooted, with simple, negative roots (see \cite{Simion/84}). 
We use our first result of equidistribution to show that on the set of permutations of words in the alphabet $\{1^r, 2^r\}$, the polynomial of the joint distribution of des and maj admits, for odd $r$ a unique \emph{unitary} factor.  This factorisation, together with the one of Carlitz's $q$-Eulerian polynomial (the polynomial of the joint distribution of des and maj on the symmetric group) that we show in \cite{CarnevaleVoll/16}, may be considered a refinement of Frobenius' result, and supports a conjecture we made in \cite{CarnevaleVoll/16} and  that we translate in Section \ref{stcmulti} in terms of the joint distribution of $(\stc,\invv)$ on quotients of the symmetric group.

Generalisations of MacMahon's  result \eqref{eq:mmpart} to signed permutations  were first obtained by Adin, Brenti and Roichman in \cite{AdinBrentiRoichman/01} and to even-signed permutations by Biagioli in \cite{Biagioli/03}.
In the last section of this note we define  Eulerian statistics $\nstc$ and $\dstc$ that, together with the length, are  equidistributed, respectively, with the Euler-Mahonian pairs $(\ndes,\nmaj)$ on the hyperoctahedral group and $(\ddes,\dmaj)$ on the even hyperoctahedral group.

\section{Stc on quotients of the symmetric group and multiset permutations}\label{stcmulti}
For $n,m \in \N,$ $m\leq n$ we denote with $[n]:=\{1,\ldots,n\}$ and $[m,n]:=\{m,m+1,\ldots,n\}$.  For a permutation $\sigma \in S_n$ we use the one-line notation or the disjoint cycle notation. 

The Coxeter length $\ell$ for $\sigma\in S_n$ coincides with the inversion number $\invv(\sigma):=|\{(i,j)\in[n]\times [n] \mid i<j, \,\sigma(i)>\sigma(j)\}|$. 
Also, for a (signed) permutation $\sigma\in S_n$ (respectively, $B_n$), we let $\iset(\sigma):=\{(i,j)\in [n]\times[n] \mid \sigma(i)>\sigma(j)\}$.

It is well-known that the symmetric group $S_n$ is in bijection with the set of words $w=w_1\cdots w_n $ $\in E_n$ where 
\[E_n=\{w=w_1\cdots w_n \mid w_i \in [0,n-i], \mbox{ for  } i=1,.\ldots, n-1\}. \]
One of such bijections is the Lehmer code, defined as follows.

For  $\sigma \in S_n$,  $\code(\sigma)=c_1\cdots c_n \in E_n$  where $c_i= |\{j \in [i+1,n] \mid \sigma(i)>\sigma(j)\}|$.
The sum of the $c_i$s gives, for each permutation, the inversion number. The  statistic  $\stc$, that together with the length constitutes an Euler-Mahonian pair equidistributed with $(\des,\maj)$, is defined as follows (cf. \cite[Definition 3.1]{Skandera/01}):
$\stc(\sigma)=\sta(\code(\sigma))$, where for a word $w\in E_n$ $$\sta(w)=\max\{ r \in [n] \mid\mbox{ there exists a subsequence } w_{i_1}\cdots w_{i_r} >(r-1)(r-2)\cdots 1 \, 0\}$$
that is, the maximal length of a possible staircase subword. 

For example let $\sigma=452361 \in S_6$. Then $\code(\sigma) =331110$, $\invv(\sigma)=\sum_i c_i=9$, $\stc(\sigma)=\sta(\code(\sigma))=3$. So defined, the statistic $\stc$  constitutes an Eulerian partner for the inversions on $S_n$, cf. \cite[Theorem 3.1]{Skandera/01}.
\begin{thm}\label{thm:ska}
Let $n \in \N$. Then
	\begin{equation*}
	\sum_{w\in S_n}x^{\des(w)}q^{\maj(w)}=\sum_{w\in S_n}x^{\stc(w)}q^{\ell(w)}
	\end{equation*}
\end{thm}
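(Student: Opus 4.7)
The plan is to work through the Lehmer code bijection. Since $\code:S_n\to E_n$ is a bijection satisfying $\ell(w)=\invv(w)=|\code(w)|$, where $|c|:=\sum_i c_i$ for $c\in E_n$, and $\stc(w)=\sta(\code(w))$ by definition, the right-hand side of the theorem becomes $B_n(x,q):=\sum_{c\in E_n}x^{\sta(c)}q^{|c|}$. Setting $A_n(x,q):=\sum_{w\in S_n}x^{\des(w)}q^{\maj(w)}$, I would prove $A_n=B_n$ by refining both sides according to the Eulerian statistic: set $A_{n,s}(q):=\sum_{w\in S_n,\,\des(w)=s}q^{\maj(w)}$ and $B_{n,s}(q):=\sum_{c\in E_n,\,\sta(c)=s}q^{|c|}$, and show they satisfy the same Carlitz-type recurrence
\begin{equation*}
F_{n,s}(q)\;=\;[s+1]_q\,F_{n-1,s}(q)\;+\;q^s\,[n-s]_q\,F_{n-1,s-1}(q),
\end{equation*}
with the common initial value $F_{1,0}(q)=1$.

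That $A_{n,s}(q)$ satisfies this recurrence is the classical $q$-Eulerian identity of Carlitz, and also follows from the $\rho=(1^n)$ specialisation of MacMahon's identity \eqref{eq:mmpart}. The main task is therefore to establish the same recurrence for $B_{n,s}(q)$, and this rests on the following behaviour of $\sta$ under prepending: writing $c=(c_1,c')\in E_n$ with $c_1\in[0,n-1]$ and $c'\in E_{n-1}$ (after re-indexing positions),
\begin{equation*}
\sta(c_1,c')\;=\;\begin{cases}\sta(c')+1&\text{if }c_1>\sta(c'),\\ \sta(c')&\text{if }c_1\leq\sta(c').\end{cases}
\end{equation*}
This follows from a short case analysis on whether a maximal staircase subword of $(c_1,c')$ uses $c_1$: if it avoids $c_1$, then it is a staircase subword of $c'$; if it begins with $c_1$, then $c_1$ must strictly exceed the length of the ensuing staircase in $c'$, and conversely any $c_1>\sta(c')$ can be prepended to a maximal staircase of $c'$ to extend its length by one.

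Splitting the defining sum of $B_{n,s}(q)$ by the value of $c_1$ and using the displayed formula, contributions with $c_1\leq s=\sta(c')$ give $[s+1]_q\,B_{n-1,s}(q)$, while contributions with $c_1\in\{s,\ldots,n-1\}$ and $\sta(c')=s-1$ give $q^s[n-s]_q\,B_{n-1,s-1}(q)$. This is exactly the claimed recurrence; the base case $n=1$ is immediate, and induction closes the argument. The main obstacle is the prepending formula for $\sta$; once it is in place, the identification with Carlitz's $q$-Eulerian recurrence is automatic, and yields a proof somewhat different in flavour from Skandera's original bijective argument.
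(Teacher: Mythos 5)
Your proposal is correct. Note first that the paper does not actually prove Theorem \ref{thm:ska} at all: it is quoted as Skandera's result, with a pointer to \cite[Theorem 3.1]{Skandera/01}, so any self-contained argument is already ``different from the paper''. Your argument is sound in all its steps: the reduction to $\sum_{c\in E_n}x^{\sta(c)}q^{|c|}$ via the Lehmer code is immediate from the definitions of $\stc$ and $\invv$; the prepending formula for $\sta$ is right (a maximal staircase subword of $(c_1,c')$ either avoids $c_1$, hence lives in $c'$, or has $c_1$ as its first letter, which forces $c_1>r-1$ for a staircase of length $r$ --- this gives exactly the dichotomy $c_1>\sta(c')$ versus $c_1\le\sta(c')$); and the resulting decomposition of $E_n\cong[0,n-1]\times E_{n-1}$ yields precisely $B_{n,s}=[s+1]_qB_{n-1,s}+q^s[n-s]_qB_{n-1,s-1}$, since the admissible values of $c_1$ in the two cases are $\{0,\dots,s\}$ and $\{s,\dots,n-1\}$ respectively. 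Matching this against Carlitz's recurrence for the $(\des,\maj)$ refinement closes the induction. One small presentational point: you should record the convention $\sta(0\cdots0)=0$ (the maximum over an empty set of admissible $r$), which is needed for the base of the recursion and for $B_{n,-1}=0$; with that in place the argument is complete. What this route buys is a short, fully elementary inductive proof that never leaves the world of codes, at the price of invoking the classical Carlitz recurrence on the $(\des,\maj)$ side; it does not, however, produce a bijection realising the equidistribution, and in particular it does not give the descent-class refinement (Proposition \ref{thm:fh} of Foata--Han) that the paper actually needs for its Proposition \ref{thm:equi}.
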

Given a composition of $N$, the corresponding set of multiset permutations $\Sr$ is naturally in bijection with certain quotients and inverse descent classes of $S_N$. In particular, for $\rho=(\rho_1,\ldots,\rho_m)$ a composition of $N$, for $i=1\ldots, m-1$ we let 
\begin{equation}\label{eq:subset}
 r_i:=\sum_{k=1}^i {\rho _k}\mbox{  and }R:=\{r_i, i\in [m]\}\subseteq [N-1].
 \end{equation}
We let $\SR$ and $\ISR $  denote, respectively, the quotient  and the inverse descent class of of the symmetric group
\[\SR=\{w \in S_N \mid \Des(w)\subseteq R\}, \qquad \ISR:=\{w \in S_N\mid \Des(w^{-1})\subseteq R\}. \]

A natural way to associate a permutation to a multiset permutation is the standardisation. Givena $\rho$ a composition of $N$ and a word $w$ in the alphabet $\{{1}^{\rho_1},\ldots,{m}^{\rho_m}\}$, $\stdr(w)$ is the permutation of $S_N$ obtained substituting, in the order of appearence in $w$ from left to right,  the $\rho_1$ $1$s with the sequence $1\,2\ldots\rho_1$, the $\rho_2$ $2$s with the sequence $\rho_1+1 \ldots \rho_1+\rho_2$ and so on. So for example if $\rho=(2,3,2)$ and $w=1223132\in \Sr$, then $\stdr(w)=1346275 \in S_7$.

The following result is due to Foata and Han \cite[Propriet\' e 2.2]{FoataHan/04}.
\begin{pro}\label{thm:fh}
	Let $n \in \N$, $J\subseteq [n-1]$. Then
	\begin{equation}
	\sum_{ \substack{\{w\in S_n \mid\\ {\Des(w)=J}\}}}x^{\des(w^{-1})}q^{\maj(w^{-1})}=\sum_{ \substack{\{w\in S_n \mid\\ {\Des(w)=J}\}}}x^{\stc(w)}q^{\ell(w)}
	\end{equation}
\end{pro}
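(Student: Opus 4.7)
My plan is to prove the proposition by constructing, for each $J \subseteq [n-1]$, a bijection
\[
\Phi_J \colon \{\sigma \in S_n : \Des(\sigma) = J\} \longrightarrow \{\sigma \in S_n : \Des(\sigma) = J\}
\]
satisfying $\stc(\Phi_J(\sigma)) = \des(\sigma^{-1})$ and $\ell(\Phi_J(\sigma)) = \maj(\sigma^{-1})$. The existence of such a family $\{\Phi_J\}_{J \subseteq [n-1]}$ yields the claim by summing over each fixed descent class. A small-$n$ sanity check (e.g.\ $n = 4$, $J = \{2\}$) shows that $\Phi_J$ cannot be taken to be the identity in general, so some genuine rearrangement inside each descent class is required.

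The natural setting for constructing $\Phi_J$ is the Lehmer code, since $\code$ identifies $(\stc, \ell)$ on $S_n$ with $(\sta, \Sigma)$ on $E_n$, where $\Sigma$ denotes the sum of entries of a word. The task thus reduces to: given $\sigma$ with $\Des(\sigma) = J$, produce a code $c \in E_n$ such that $\sta(c) = \des(\sigma^{-1})$, $\Sigma(c) = \maj(\sigma^{-1})$, and $\Des(\code^{-1}(c)) = J$. Theorem~\ref{thm:ska} already guarantees that, on $S_n$ as a whole, the pair $(\sta, \Sigma)$ on codes realises the distribution of $(\des, \maj)$; the additional feature here is the refinement by descent set. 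A natural strategy is induction on $n$, using a deletion operation on $\sigma$ (removing the maximum letter, or alternatively removing the first or last position) that simultaneously adjusts $\code(\sigma)$, $\Des(\sigma)$ and $\sigma^{-1}$ in trackable ways; this permits a case analysis by the position of the deleted letter, and the inductive hypothesis on $S_{n-1}$ provides the required bijection on the remaining data.

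The main obstacle is the simultaneous control of the descent set and the two code-level statistics: $\Des(\sigma)$ is intrinsically positional, while $\stc$ and $\ell$ are encoded in $\code(\sigma)$, which obscures the positions of the individual $\sigma_i$. Keeping these two views synchronised throughout the induction, so that at each step the resulting permutation still has descent set exactly $J$ while the updated code still realises $(\des \circ (\cdot)^{-1}, \maj \circ (\cdot)^{-1})$, is delicate. This synchronisation is the technical heart of Foata--Han's argument in~\cite{FoataHan/04}, which I would invoke rather than reproduce.
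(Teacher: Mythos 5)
The paper gives no proof of this proposition at all: it is imported verbatim as Foata--Han's Propri\'et\'e 2.2 with a bare citation to \cite{FoataHan/04}, and your proposal --- after correctly noting that the identity map fails on a descent class and outlining what a descent-class-preserving bijection would need to do --- likewise defers to Foata--Han for the actual construction. So the proposal is consistent with, and in substance identical to, the paper's treatment, namely a citation.
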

\begin{pro}\label{thm:equi}
Let $N \in \N$, $\rho$  a composition of $N$ and $R\subseteq [N-1]$ as in \eqref{eq:subset}. The pair $(\stc,\ell)$ on $\SR$ is equidistributed with $(\des,\maj)$ on $\Sr$:
\begin{equation}\label{equi}
C_{\rho}(x,q):=\sum_{w \in \Sr}{x^{\des(w)}q^{\maj(w)}}=\sum_{w \in \SR}{x^{\stc(w)}q^{\ell(w)}}
\end{equation}
\end{pro}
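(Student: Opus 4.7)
The plan is to transport the identity to the full symmetric group $S_N$, where we may appeal to Proposition~\ref{thm:fh}. The strategy factors through three elementary bijections: standardisation, taking inverses, and the decomposition of $\SR$ by descent set.

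First I would verify that standardisation is a bijection $\stdr\colon \Sr \to \ISR$ which preserves both $\des$ and $\maj$. The fact that it is a bijection onto the inverse descent class $\ISR=\{w\in S_N\mid \Des(w^{-1})\subseteq R\}$ is immediate from its definition: within each block of consecutive integers $[r_{i-1}+1, r_i]$ the values are inserted left to right in increasing order, so $w^{-1}$ can only descend at the block boundaries, i.e., at positions of~$R$. Moreover $\Des(\stdr(w))=\Des(w)$, since $\stdr$ is strictly increasing on repeated letters of $w$ and strictly respects the strict order between distinct letters; hence both $\des$ and $\maj$ are preserved.

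Next, the involution $\sigma\mapsto\sigma^{-1}$ is a bijection $\ISR \to \SR$. Setting $\sigma=w^{-1}$, it follows that
\begin{equation*}
\sum_{w\in \Sr}x^{\des(w)}q^{\maj(w)}=\sum_{\sigma\in\ISR}x^{\des(\sigma)}q^{\maj(\sigma)}=\sum_{w\in\SR}x^{\des(w^{-1})}q^{\maj(w^{-1})}.
\end{equation*}
Now I would split $\SR$ according to actual descent set, writing $\SR=\bigsqcup_{J\subseteq R}\{w\in S_N\mid \Des(w)=J\}$, and apply Proposition~\ref{thm:fh} on each piece separately to obtain
\begin{equation*}
\sum_{w\in\SR}x^{\des(w^{-1})}q^{\maj(w^{-1})}=\sum_{J\subseteq R}\sum_{\substack{w\in S_N\\ \Des(w)=J}}x^{\stc(w)}q^{\ell(w)}=\sum_{w\in\SR}x^{\stc(w)}q^{\ell(w)}.
\end{equation*}
Chaining the two displays gives \eqref{equi}.

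I do not anticipate a serious obstacle: the only point that requires a little care is the compatibility of $\stdr$ with $\des$ and $\maj$, which is standard but should be justified explicitly. The rest of the argument is a transparent decomposition into inverse descent classes that converts the multiset statement into an application of Proposition~\ref{thm:fh} one class at a time.
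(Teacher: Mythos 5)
Your proposal is correct and follows essentially the same route as the paper: standardise to pass from $\Sr$ to $\ISR$ preserving $(\des,\maj)$, then invoke Proposition~\ref{thm:fh} to land on $(\stc,\ell)$ over $\SR$. The only difference is that you spell out the intermediate inversion bijection $\ISR\to\SR$ and the decomposition by descent sets $J\subseteq R$, steps the paper leaves implicit when summing Proposition~\ref{thm:fh} over descent classes.
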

\begin{proof}
The standardisation $\stdr$ is a bijection between $\Sr$ and $\ISR$, and preserves  $\des$ and $\maj$, so
\[\sum_{w\in \Sr}x^{\des(w)}q^{\maj(w)}=\sum_{w\in \Sr}x^{\des(\stdr(w))}q^{\maj(\stdr(w))}=\sum_{w\in \ISR}x^{\des(w)}q^{\maj(w)}.\]
By Proposition \ref{thm:fh} the last term is the desired distribution on $\SR$:
\begin{equation*}
	\sum_{w\in \ISR}x^{\des(w)}q^{\maj(w)}=\sum_{w\in \SR}x^{\stc(w)}q^{\ell(w)}.
	\end{equation*}
\end{proof}

As an application, we prove a result  about the bivariate factorisation of the polynomial $C_\rho(x,q)$, that in \cite{CarnevaleVoll/16} is used to prove deduce analytic properties of some orbit Dirichlet series. We say that a bivariate polynomial $f(x,y)\in \Z[x,y]$ is unitary if there exist integers $\alpha, \beta\geq 0$ and $g\in \Z[t]$ so that $f(x,y)=g(x^\alpha y^\beta)$ and all the complex roots of $g$ lie on the unit circle (see also \cite[Remark 2.9]{CarnevaleVoll/16}).
\begin{pro}
Let $\rho=(r,r)$ where $r \equiv 1 \pmod 2$. Then
\begin{equation}\label{fact}
C_{\rho}(x,q)=(1+xq^{r})\widetilde{C}_{\rho}(x,q),\end{equation}
where $\widetilde{C}_{\rho}(x,q)$ has no
 unitary factor.
\end{pro}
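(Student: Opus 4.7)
My plan is to derive a closed-form expression for $C_\rho(x,q)$ when $\rho=(r,r)$, exhibit the factor $1+xq^r$ by a symmetry argument, and finally rule out any further unitary factor by specialising at $q=1$.

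First, via Proposition~\ref{thm:equi} I would rewrite $C_\rho(x,q)$ as a generating function over the quotient $\SR\subseteq S_{2r}$ with $R=\{r\}$. Each $\sigma\in\SR$ is parametrised by the $r$-subset $A=\{\sigma_1<\cdots<\sigma_r\}\subseteq[2r]$; a direct computation of $\code(\sigma)=(a_1-1,\ldots,a_r-r,0,\ldots,0)$ together with the definition of $\sta$ gives the clean identification $\stc(\sigma)=|A\cap[r+1,2r]|$. Splitting the inversions of $\sigma$ according to whether each of $A$ and its complement lies in $[1,r]$ or in $[r+1,2r]$ then yields the $q$-binomial closed form
\[
C_\rho(x,q)=\sum_{s=0}^r x^sq^{s^2}\binom{r}{s}_q^2.
\]

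The factorisation~\eqref{fact} is immediate once this formula is in place: evaluating at $x=-q^{-r}$ produces $\sum_s(-1)^sq^{s(s-r)}\binom{r}{s}_q^2$, which is sent to $(-1)^r$ times itself under $s\mapsto r-s$ by means of $\binom{r}{s}_q=\binom{r}{r-s}_q$. For $r$ odd this forces the sum to vanish, giving $(1+xq^r)\mid C_\rho(x,q)$ and defining $\widetilde{C}_\rho$.

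For the absence of further unitary factors, suppose $g(x^\alpha q^\beta)\mid \widetilde{C}_\rho(x,q)$ for some non-constant $g\in\Z[t]$ with all roots on the unit circle. If $\alpha\geq 1$, specialising $q=1$ gives $g(x^\alpha)\mid \widetilde{C}_\rho(x,1)$, which would place a root of $\widetilde{C}_\rho(x,1)$ on $|x|=1$. However, Simion's theorem asserts that $C_\rho(x,1)=\sum_s\binom{r}{s}^2x^s$ has only simple real negative roots; since $-1$ is one of them (by the same symmetry at $q=1$) and is accounted for by the specialisation $1+x=(1+xq^r)|_{q=1}$, the remaining roots of $\widetilde{C}_\rho(x,1)$ are negative reals distinct from $-1$, so none lies on $|x|=1$---a contradiction. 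If instead $\alpha=0$ and $\beta\geq 1$, then evaluating $q$ at a root $\zeta$ of $g(q^\beta)$ would force $C_\rho(x,\zeta)\equiv 0$ as a polynomial in $x$, contradicting that the constant term $\binom{r}{0}_\zeta^2=1$ is nonzero.

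The main technical calculation is the closed-form evaluation in the first step; the divisibility then follows from the $s\leftrightarrow r-s$ symmetry, and the subtler matter of ruling out all other unitary factors reduces, via specialisation at $q=1$, to Simion's real-rootedness theorem.
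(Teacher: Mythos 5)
Your proof is correct, and its central step is genuinely different from the paper's. Both arguments agree on the framing: pass to the quotient $\SR$ with $R=\{r\}$ via Proposition~\ref{thm:equi}, note that a minimal coset representative is determined by the set $A$ of values occupying its first $r$ positions with $\stc=|A\cap[r+1,2r]|$, and dispose of the ``no further unitary factor'' claim by specialising at $q=1$ and invoking Simion's real-rootedness (the paper states this step very tersely; your explicit case analysis for $\alpha\geq 1$ versus $\alpha=0,\beta\geq 1$ spells out what the paper leaves implicit, and is sound). Where you diverge is in producing the factor $1+xq^{r}$: you derive the closed form $C_\rho(x,q)=\sum_{s=0}^{r}x^{s}q^{s^{2}}\binom{r}{s}_q^{2}$ --- which checks out, since splitting $A$ into its parts in $[r]$ and $[r+1,2r]$ contributes $q^{\binom{r-s+1}{2}}\binom{r}{s}_q$ and $q^{rs+\binom{s+1}{2}}\binom{r}{s}_q$ against the offset $q^{-\binom{r+1}{2}}$, leaving exactly $q^{s^{2}}$ --- and then extract the factor from the $s\mapsto r-s$ symmetry at $x=-q^{-r}$, where the sum equals $(-1)^{r}$ times itself and hence vanishes for odd $r$. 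The paper instead builds an explicit involution $\vf$ on $\SR$: it picks the least $i\in[r]$ with $i$ and $i+r$ in different blocks (which exists precisely because $r$ is odd), swaps them and re-sorts, changing $\stc$ by $\pm 1$ and $\ell$ by $\pm r$ in tandem, so that monomials pair off as $x^{a}q^{b}$ with $x^{a+1}q^{b+r}$. Your route buys a clean closed formula and makes the failure for even $r$ transparent (the alternating sum need not vanish); the paper's involution gives a bijective explanation that is closer in spirit to what one would need for the general conjecture about $\rho=(r,\ldots,r)$. One small point worth a sentence in your write-up: vanishing at $x=-q^{-r}$ gives divisibility by $x+q^{-r}$ over $\Q(q)$, and one should appeal to primitivity of $1+xq^{r}$ and of $C_\rho$ (constant term $1$) to land the factorisation in $\Z[x,q]$; this is routine, not a gap.
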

\begin{proof}
The polynomial $C_{\rho}(x,1)$, descent polynomial of $\Sr$, has all real, simple, negative roots (cf. \cite[Corollary 2]{Simion/84}). Thus a factorisation of the form \eqref{fact} implies that $\widetilde{C}_{\rho}(x,q)$ has no
 unitary factor.
To prove \eqref{fact} we define an involution $\vf$ on $\SR$  such that, for all $w\in \SR$, $|\ell(\vf (w))-\ell(w)|=r$ and $|\stc(\vf (w))-\stc(w)|=1$.

We first show that when $\rho=(\rho_1,\rho_2)$,  the statistic $\stc$ on the corresponding quotient $S_{N}^{\{\rho_1\}}$ has a very simple description: it counts  the occurrences of elements $j\in [\rho_1 +1, N]$ in the first $\rho_1$ positions.  A permutation $w\in S_{N}^{\{\rho_1\}}$ has at most a descent at $\rho_1$, so its code is of the form $\code(w)=c_1\cdots c_{\rho_1} 0\cdots 0$, with  $0\leq c_1\leq\ldots\leq c_{\rho_1}$. The first (possibly) non-zero element of the code is exactly the number of elements of the second block for which the image is in the first block, and this coincides with the length of the longest staircase subword of the code. 

Let now $\rho=(r,r)$ and $r$ odd. For $w \in \SR$ we  let \[M_w=\{i\in [r] \mid w\uu(i)\leq r \mbox{ and } w\uu(i+r)> r \mbox{ or } w\uu(i)> r \mbox{ and } w\uu(i+r)\leq r \},\]
that is, the set of $i\in [r]$ for which $i$ and $i+r$ are not in the same ascending block. Since $r$ is odd, $M_w$ is non-empty for all $w\in \Sr$.
We then define $\vf(w)=((\iota,\iota + r)w)^{R^c}$, where $\iota:=\min\{i \in M_w\}$ and, for $\sigma \in S_N$,  $\sigma^{R^c}$ denotes the unique minimal coset representative in the quotient $\SR$. Clearly $\stc(\vf(w))=\stc(w)\pm 1$.
Suppose now that $w^{-1}(\iota)\leq r$ and $w^{-1}(\iota)>r$ (the other case is analogous). Then $$\ell(\vf(w))=\ell(w)+|\{i \in [r]\mid w(i)>\iota\}|+|\{i \in [r+1,2r]\mid w(i)<\iota+r\}|=\ell(w)+r-i+i.\qedhere$$
\end{proof}
We reformulate \cite[Conjecture B]{CarnevaleVoll/16} in terms of the bivariate distribution of $(\stc,\ell)$ on quotients of $S_n$.
\begin{con}
	Let $\rho$ be a composition of $N$ and $R\subseteq [N-1]$ constructed as in \eqref{eq:subset}. Then $C_\rho (x,q)=\sum_{w\in \SR}{x^{\stc(w)}q^{\ell(w)}}$ has a unitary factor if and only if $\rho=(\rho_1,\ldots,\rho_m)$ where $\rho_1=\ldots=\rho_m=r$ for some odd $r$ and even $m$. In this case
	\[\sum_{w\in \SR}{x^{\stc(w)}q^{\ell(w)}}=(1+xq^{\frac{rm}{2}})\widetilde{C}_\rho(x,q)\]
	for some $\widetilde{C}_\rho(x,q)\in \Z[x,q]$ with no unitary factors.
\end{con}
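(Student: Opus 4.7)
The plan is to handle the two directions of the biconditional separately, using Proposition 2.5 (the case $m=2$) as the template.

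For sufficiency, assume $\rho=(r,\ldots,r)$ with $m$ even parts all equal to an odd $r$. I would construct an involution $\vf$ on $\SR$ generalising the one used in the proof of Proposition 2.5 by pairing each value $v\in[rm/2]$ with its half-shift $v+rm/2$. Concretely, set
\[M_w=\{v\in[rm/2]\mid w\uu(v)\text{ and }w\uu(v+rm/2)\text{ lie in different halves of }[1,rm]\},\]
the halves being $[1,rm/2]$ and $[rm/2+1,rm]$, and observe by a parity argument that $M_w\neq\emptyset$ for every $w\in\SR$, using crucially that $r$ is odd so that no family of pairs $\{v,v+rm/2\}$ can partition the positions of one half. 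Setting $\iota:=\min M_w$ and $\vf(w):=((\iota,\iota+rm/2)w)^{R^c}$, I would verify that $\vf$ is an involution satisfying $|\stc(\vf(w))-\stc(w)|=1$ and $|\ell(\vf(w))-\ell(w)|=rm/2$ with synchronised signs, thereby producing the factor $(1+xq^{rm/2})$ in $C_\rho(x,q)$.

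For necessity, I would first observe that any unitary factor $g(x^\alpha q^\beta)$ of $C_\rho(x,q)$ must in fact be of the form $1+xq^\beta$. Indeed, specialising $q=1$ produces a factor $g(x^\alpha)$ of $C_\rho(x,1)$, which by Simion \cite{Simion/84} has only real, simple, negative roots; this forces $g(t)=t+1$ and $\alpha=1$. It therefore suffices to characterise when $(1+xq^\beta)\mid C_\rho(x,q)$. Since any such divisibility implies $(1+x)\mid C_\rho(x,1)$, and conversely Simion's simplicity shows that at most one $\beta$ can work, the problem reduces to: (a) $-1$ is a root of $C_\rho(x,1)$ exactly when $\rho$ is of the special form, and (b) in the special case the unique $\beta$ equals $rm/2$. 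Part (b) is handed to us by sufficiency. Part (a) is a combinatorial statement about $\sum_{w\in\Sr}(-1)^{\des(w)}$, which I would attack either by a sign-reversing involution on $\Sr$ whose fixed-point set is empty precisely outside the special case, or by exploiting Simion's explicit linear-factor formula for the roots of the multiset descent polynomial.

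The hardest step is almost certainly the length-change calculation for $\vf$ in the general $m$ case: applying the transposition $(\iota,\iota+rm/2)$ moves values across $m/2-1$ intermediate blocks, and one must carefully track how projecting back to the minimal coset representative in $\SR$ reshuffles these intermediate blocks and modifies the inversion count, whereas in the $m=2$ case no intermediate blocks appear and the count is immediate. A parallel obstacle on the necessity side is the characterisation (a) of when $-1$ is a root of $C_\rho(x,1)$, which is not transparent from Simion's factorisation and likely demands either a direct bijective proof or an induction on the parts of $\rho$.
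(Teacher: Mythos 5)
The statement you are proving is stated in the paper as a \emph{conjecture} (a reformulation of Conjecture B of \cite{CarnevaleVoll/16}); the paper gives no proof of it, only the special case $\rho=(r,r)$ with $r$ odd, which is the proposition you correctly identify as the template. So your proposal cannot be measured against a proof in the paper, and it must stand on its own. It does not: the steps you defer are exactly the open content of the conjecture, and at least one step you do claim is wrong as stated. Your parity argument for $M_w\neq\emptyset$ fails for general even $m$: if every pair $\{v,v+rm/2\}$ has both preimages in the same position-half, then the $rm/2$ positions of each half are partitioned into complete pairs, which only forces a contradiction when $rm/2=r(m/2)$ is odd, i.e.\ when $m/2$ is odd. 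For $m\equiv 0\pmod 4$ no contradiction arises, so non-emptiness of $M_w$ is not established. (A block-by-block pairing, using that each of the $m$ blocks has odd size $r$, would fix non-emptiness, but then the effect of the swap on $\stc$ and $\ell$ is even less controlled.)

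The deeper gap is the control of $\stc$ under the involution. In the paper's $m=2$ argument, everything rests on the observation that a coset representative in $S_N^{\{\rho_1\}}$ has at most one descent, so its code is a single weakly increasing segment followed by zeros and $\stc$ collapses to a simple count of second-block values landing in the first block; the swap then visibly changes this count by $1$. For $m>2$ the code of an element of $\SR$ is a concatenation of $m$ weakly increasing segments, the longest staircase subword can draw from all of them, and there is no reason the transposition $(\iota,\iota+rm/2)$ followed by projection to the minimal coset representative changes $\stc$ by exactly $1$, nor $\ell$ by exactly $rm/2$, nor that the two changes have synchronised signs. On the necessity side, your reduction via $q=1$ and Simion's real-rootedness is sound modulo two points: you must separately exclude $\alpha=0$ (easy, since the coefficient of $x^0$ in $C_\rho(x,q)$ is $1$), and part (a) --- characterising when $-1$ is a root of the multiset descent polynomial --- is itself an unproved claim that you only gesture at. In short, this is a reasonable research plan for attacking the conjecture, not a proof, and the paper offers none either.
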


Proposition \ref{thm:equi} suggests a natural extension of the definition of the statistic $\stc$ to multipermutations, thus answering a question raised in \cite{Skandera/01}. 

For $w\in \Sr$, $\stdr (w) \in \ISR$. So we have a bijection between multiset permutations $\Sr$ and the quotient $\SR$
\[\istd: \Sr \rightarrow \SR,\quad \istd(w)=(\stdr (w))^{-1}\]
which is inversion preserving:  $\invv(w)=\invv(\istd(w))$.
\begin{dfn}
	Let $\rho$ be a composition of $N$. For a multiset permutation $w \in \Sr$ the \emph{multistc} is
	\[\mstc(w):=\stc(\istd(w)).\]
\end{dfn}
The pair $(\mstc,\invv)$ is equidistributed with $(\des,\maj)$ on $\Sr$, as
\[\sum_{w \in \Sr} x^{\mstc(w)}q^{\invv(w)}=\sum_{w \in \SR} x^{\stc(w)}q^{\invv(w)}=\sum_{w \in \Sr} x^{\des(w)}q^{\maj(w)},\]
which together with \eqref{eq:mmpart} proves the following theorem.
\begin{thm}
	Let $\rho$ be a composition of $N\in \N$. Then
\[ \sum_{k\geq 0}\left(\prod_{i=1}^m\binom{\rho_j +k}{k}_q\right) x^k=\frac{\sum_{w \in \Sr}{x^{\mstc(w)}q^{\invv(w)}}}{\prod_{i=0}^{N}(1-xq^i)} \in \Z[q][[x]].\]
\end{thm}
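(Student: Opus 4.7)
The plan is to observe that this theorem is a direct consequence of MacMahon's identity \eqref{eq:mmpart} combined with Proposition \ref{thm:equi} and the definition of $\mstc$; it is essentially a bookkeeping statement once the work of the previous section is in place. The main point to verify is the identity
\[\sum_{w \in \Sr}{x^{\mstc(w)}q^{\invv(w)}} = \sum_{w \in \Sr}{x^{\des(w)}q^{\maj(w)}},\]
after which substitution into \eqref{eq:mmpart} concludes.

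To prove this identity, I would proceed in two small steps. First, unfold the definition $\mstc(w) = \stc(\istd(w))$ and use that $\istd\colon \Sr \to \SR$ is a bijection which preserves the inversion number (noting also that on $\SR \subseteq S_N$, the statistic $\invv$ coincides with the Coxeter length $\ell$). This yields
\[\sum_{w \in \Sr}{x^{\mstc(w)}q^{\invv(w)}} = \sum_{\sigma \in \SR}{x^{\stc(\sigma)}q^{\ell(\sigma)}}.\]

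Second, I would invoke Proposition \ref{thm:equi}, which identifies the right-hand side above with $C_\rho(x,q) = \sum_{w \in \Sr} x^{\des(w)} q^{\maj(w)}$. Chaining the two equalities gives the desired joint equidistribution, and then \eqref{eq:mmpart} delivers the stated product formula.

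There is no real obstacle: the hard analytic input (real-rootedness, factorisation, Foata--Han) is not needed here, and the only subtlety is making sure the bijection $\istd$ really is inversion-preserving, which follows transparently from the definition of standardisation (each pair of equal letters in $w$ becomes an increasing pair in $\stdr(w)$, so inversions of $w$ correspond precisely to inversions of $\stdr(w)^{-1}$).
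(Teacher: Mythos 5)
Your proof is correct and takes essentially the same route as the paper: unfold the definition of $\mstc$ via the inversion-preserving bijection $\istd\colon \Sr\to\SR$ (where $\invv=\ell$), apply Proposition~\ref{thm:equi} to identify the resulting sum with $\sum_{w\in\Sr}x^{\des(w)}q^{\maj(w)}$, and substitute into MacMahon's identity~\eqref{eq:mmpart}. Nothing further is needed.
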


\section{Signed and even-signed permutations}
MacMahon's result \eqref{eq:mmpart} for the symmetric group (i.e. for $\rho_1=\ldots\rho_m=1$) is often present in the literature as Carlitz's identity, satisfied by the Carlitz's Eulerian polynomial $A_n(x,q):=\sum_{\sigma \in S_n} x^{\des(\sigma)}q^{\maj(\sigma)}$. 

Such result was extended, for suitable statistics, to the groups of signed and even-signed permutations. The major indices  so defined are in both cases equidistributed with the Coxeter length $\ell$. In this section we define type $B$ and type $D$ analogues of the statistic $\stc$, that together with the length satisfy these generalised Carlitz's identities.
\subsection{Eulerian companion for the length on $B_n$}
Let $n\in \N$. The hyperoctahedral group $B_n$ is the group of permutations $\sigma=\sigma_1\cdots \sigma_n$ of $\{\pm 1,\ldots,\pm n\}$ for which $|\sigma|:=|\sigma_1|\ldots |\sigma_n| \in S_n$.
For $\sigma \in B_n$, the negative set and negative statistic are  
\[\nset(\sigma)=\{i \in [n] \mid \sigma(i)<0\}\quad \nneg(\sigma)=|\nset(\sigma)|.\]
The Coxeter length $\ell$ for $\sigma$ in $B_n$ has the following combinatorial interpretation (see, for instance \cite{BjoernerBrenti/05}):
\begin{equation*}
\ell(\sigma)=\invv(\sigma)+\nneg(\sigma)+\nsp(\sigma),
\end{equation*}
where $\invv$ is the usual inversion number and $\nsp(\sigma):=|\{(i,j)\in [n]\times[n] \mid i<j,\,\sigma(i)+\sigma(j)<0\}|$ is the number of \emph{negative sum pairs}.

 In  \cite{AdinBrentiRoichman/01} an Euler-Mahonian pair of the \emph{negative} type was defined as follows.
The negative descent and negative major index are, respectively,
\begin{equation}\label{eq:nstat}
\ndes(\sigma)=\des(\sigma)+\nneg(\sigma) \quad \nmaj(\sigma)=\maj(\sigma)-\sum_{i\in\nset(\sigma)}\sigma(i).\end{equation} 
The pair $(\ndes,\nmaj)$ satisfies the following generalised Carlitz's identity, cf. \cite[Theorem 3.2]{AdinBrentiRoichman/01}.
\begin{thm}
Let $n\in \N$. Then
\begin{equation}
\sum_{r\geq 0} [r+1]^n _q x^r = \frac{\sum\limits_{\sigma \in B_n}{x^{\ndes(\sigma)}q^{\nmaj(\sigma)}}}{(1-x) \prod\limits_{i=1}^{n}{(1-x^2 q^{2i})}} \mbox{ in }\Z[q][[x]].\end{equation}	
\end{thm}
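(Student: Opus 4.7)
The plan is to derive the theorem from the classical Carlitz identity for $S_n$, that is, the case $\rho=(1^n)$ of \eqref{eq:mmpart}, which reads
\[A_n(x,q):=\sum_{w \in S_n} x^{\des(w)}q^{\maj(w)} = \prod_{i=0}^n (1-xq^i)\sum_{r\geq 0}[r+1]_q^n x^r.\]
Combined with the algebraic factorisation
\[(1-x)\prod_{i=1}^n (1-x^2 q^{2i}) = \prod_{i=0}^n (1-xq^i)\cdot \prod_{i=1}^n (1+xq^i),\]
the theorem is equivalent to the companion identity
\[\sum_{\sigma \in B_n}x^{\ndes(\sigma)}q^{\nmaj(\sigma)} = A_n(x,q)\prod_{i=1}^n(1+xq^i).\]
(One can sanity-check this for $n=1,2$ by direct enumeration of $B_n$.)

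To prove the reduced identity I would expand $\prod_{i=1}^n(1+xq^i) = \sum_{J \subseteq [n]} x^{|J|} q^{\sum_{j\in J}j}$, turning the right-hand side into the generating function over pairs $(w,J) \in S_n \times 2^{[n]}$ with weight $x^{\des(w)+|J|}q^{\maj(w)+\sum_{j\in J}j}$. The strategy is to construct an explicit bijection $\Phi\colon B_n \to S_n \times 2^{[n]}$ sending $\sigma$ to $(w,J)$ with $J=\{|\sigma(i)|: i\in\nset(\sigma)\}$, and with $w$ obtained from $\sigma$ by a canonical reordering of the negated entries (for instance, replacing each negative $\sigma(i)$ by $|\sigma(i)|$ and reinserting it into a prescribed slot), and then to verify the matching identities $\ndes(\sigma)=\des(w)+|J|$ and $\nmaj(\sigma)=\maj(w)+\sum_{j\in J}j$.

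The main obstacle is pinning down $\Phi$ so that the statistics transform correctly. The splitting $\ndes=\des+\nneg$ requires that each negation contribute exactly one extra descent in $w$, while $\nmaj=\maj+\sum_{i\in\nset}|\sigma(i)|$ requires that the position at which each $|\sigma(i)|$ is reinserted contribute exactly $|\sigma(i)|$ to $\maj(w)+\sum_{j\in J}j$. Designing an insertion rule simultaneously meeting both constraints, and then checking bijectivity, is the technical heart of the argument. An alternative, perhaps more conceptual, route is to argue directly from the left-hand side via a signed $P$-partition argument: for each $\sigma\in B_n$ one defines a class of $\sigma$-compatible integer sequences whose joint generating function equals $x^{\ndes(\sigma)}q^{\nmaj(\sigma)}/[(1-x)\prod_{i=1}^n(1-x^2q^{2i})]$, and then shows that summing over $\sigma\in B_n$ exhausts all admissible sequences $(r,\vec a)$ with $0\leq a_i\leq r$ exactly once, thereby yielding $\sum_{r\geq 0}[r+1]_q^n x^r$ on the nose.
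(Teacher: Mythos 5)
First, a remark on ground truth: the paper does not prove this statement at all --- it is quoted from Adin--Brenti--Roichman \cite[Theorem 3.2]{AdinBrentiRoichman/01} --- so the closest in-paper argument to compare with is the proof of Theorem \ref{thm:B}. Your reduction is correct and is the right first move: by Carlitz's identity $A_n(x,q)=\prod_{i=0}^n(1-xq^i)\sum_{r\geq 0}[r+1]_q^n x^r$ and the factorisation $(1-x)\prod_{i=1}^n(1-x^2q^{2i})=\prod_{i=0}^n(1-xq^i)\cdot\prod_{i=1}^n(1+xq^i)$, the theorem is equivalent to $\sum_{\sigma\in B_n}x^{\ndes(\sigma)}q^{\nmaj(\sigma)}=A_n(x,q)\prod_{i=1}^n(1+xq^i)$, and expanding $\prod_{i=1}^n(1+xq^i)=\sum_{J\subseteq[n]}x^{|J|}q^{\sum_{j\in J}j}$ is also the right way to read the right-hand side.

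The gap is that you never construct $\Phi$, and the constraints you impose on it (``each negation contributes exactly one extra descent'', an insertion rule placing $|\sigma(i)|$ so as to contribute $|\sigma(i)|$ to $\maj$) show you are hunting for something more elaborate than what is needed; no insertion or repositioning of entries occurs. Take $\Phi(\sigma)=(\sy(\sigma),J(\sigma))$, where $\sy(\sigma)\in S_n$ is the standardisation of the signed word (relabel the $n$ distinct integers $\sigma_1,\ldots,\sigma_n$ by their ranks, keeping them in their positions) and $J(\sigma)=\{|\sigma(i)|\mid i\in\nset(\sigma)\}$ --- exactly the decomposition $B_n=\bigcup_{\tau\in S_n}B(\tau)$ used in the paper's proof of Theorem \ref{thm:B}. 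Since $\des$ and $\maj$ of a word depend only on the relative order of its letters, $\des(\sigma)=\des(\sy(\sigma))$ and $\maj(\sigma)=\maj(\sy(\sigma))$, so by the very definitions \eqref{eq:nstat} one gets $\ndes(\sigma)=\des(\sy(\sigma))+|J(\sigma)|$ and $\nmaj(\sigma)=\maj(\sy(\sigma))+\sum_{j\in J(\sigma)}j$ automatically; there is nothing to design. Bijectivity is immediate: from $(\tau,J)$ one recovers $\sigma$ by listing the symbols of $([n]\setminus J)\cup\{-j\mid j\in J\}$ in increasing order as $s_1<\cdots<s_n$ and setting $\sigma_i=s_{\tau_i}$. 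With this one observation your argument closes; the alternative $P$-partition route you mention would also work but is the longer path, and is essentially how the original ABR proof proceeds.
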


Motivated by \eqref{eq:nstat} and the well-known fact that the length in type $B$ may be also written as
\begin{equation}
\ell(\sigma)=\invv(\sigma)-\sum_{i \in \nset(\sigma)} \sigma(i),
\end{equation} we define the analogue of the statistic $\stc$ for signed permutations as follows.
\begin{dfn}
	Let $\sigma\in B_n$. Then 
\[\nstc(\sigma):=\stc (\sigma)+\nneg(\sigma).\]
\end{dfn}
\begin{thm}\label{thm:B}
	Let $n\in \N$. Then
\[\sum_{\sigma\in B_n}{x^{\nstc(\sigma)}q^{\ell(\sigma)}}=\sum_{\sigma\in B_n}{x^{\ndes(\sigma)}q^{\nmaj(\sigma)}}\]
\end{thm}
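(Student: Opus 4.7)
The plan is to reduce the theorem to the type~$A$ identity of Theorem~\ref{thm:ska} via a clean factorization. Observe that both sides split into a ``type $A$'' part plus a ``sign'' contribution:
\[\nstc(\sigma) = \stc(\sigma) + \nneg(\sigma), \qquad \ell(\sigma) = \invv(\sigma) + \sum_{i\in\nset(\sigma)}|\sigma(i)|,\]
and similarly $\ndes(\sigma)=\des(\sigma)+\nneg(\sigma)$, $\nmaj(\sigma)=\maj(\sigma)+\sum_{i\in\nset(\sigma)}|\sigma(i)|$. The key observation is that each of $\stc,\invv,\des,\maj$ applied to $\sigma\in B_n$ (viewed as a word of $n$ distinct signed integers, using the same combinatorial definitions as in the symmetric group) depends only on the relative order of the entries of $\sigma$.

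I would parametrize $B_n$ bijectively by pairs $(\tau,A)\in S_n\times 2^{[n]}$, where $\tau$ is the standardization of the word $\sigma$ (replace each entry by its rank among the $n$ values $\sigma(1),\ldots,\sigma(n)$) and $A:=\{|\sigma(i)|:i\in\nset(\sigma)\}\subseteq[n]$ records the absolute values at negative positions. Given $(\tau,A)$ with $|A|=j$, one recovers $\sigma$ by setting $\nset(\sigma):=\tau^{-1}(\{1,\ldots,j\})$, placing the values $-A$ at these positions in the order dictated by $\tau$, and filling the remaining positions with $[n]\setminus A$ in the order dictated by $\tau$. Under this bijection $\nneg(\sigma)=|A|$ and $\sum_{i\in\nset(\sigma)}|\sigma(i)|=\sum_{a\in A}a$, while the order-invariance observation yields $\stc(\sigma)=\stc(\tau)$, $\invv(\sigma)=\invv(\tau)$, $\des(\sigma)=\des(\tau)$, and $\maj(\sigma)=\maj(\tau)$.

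With these identifications the sums factor: the left-hand side of the theorem becomes
\[\sum_{\sigma\in B_n}x^{\nstc(\sigma)}q^{\ell(\sigma)} = \left(\sum_{\tau\in S_n}x^{\stc(\tau)}q^{\invv(\tau)}\right)\left(\sum_{A\subseteq[n]}x^{|A|}q^{\sum_{a\in A}a}\right) = \left(\sum_{\tau\in S_n}x^{\stc(\tau)}q^{\ell(\tau)}\right)\prod_{i=1}^n(1+xq^i),\]
and the right-hand side factors identically with $(\stc,\invv)$ replaced by $(\des,\maj)$. Theorem~\ref{thm:ska} equates the two type~$A$ factors, completing the proof. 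The main point deserving attention is verifying that the map $\sigma\mapsto(\tau,A)$ is indeed a bijection onto $S_n\times 2^{[n]}$ and that all four statistics in question are genuinely order-invariant on $B_n$; both claims reduce to routine unpacking of the combinatorial definitions.
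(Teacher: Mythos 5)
Your proof is correct and follows essentially the same route as the paper: both decompose $B_n$ as $S_n\times 2^{[n]}$ via the standardization (the paper's map $\sy$) together with the set of absolute values at negative positions, observe that $\stc$ and $\invv$ are order-invariant so the sign contribution factors out as $\prod_{i=1}^n(1+xq^i)$, and then invoke Theorem~\ref{thm:ska}. The only cosmetic difference is that you re-derive the factorization $\sum_{\sigma\in B_n}x^{\ndes(\sigma)}q^{\nmaj(\sigma)}=A_n(x,q)\prod_{i=1}^n(1+xq^i)$ from the same bijection, whereas the paper imports it from Adin--Brenti--Roichman.
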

\begin{proof}
	We use essentially the same argument as in the proof of \cite[Theorem 3]{LaiPet/11}. There, the following decomposition of $B_n$ is used. 
Every permutation $\tau \in S_n$ is associated with $2^n$ elements of $B_n$, via the choice of the $n$ signs. More precisely, given a signed permutation $\sigma \in B_n$ one can consider the ordinary permutation in which the elements are in the same relative positions as in $\sigma$. We write $\sy(\sigma)=\tau$. Then
\[B_n=\bigcup_{\tau \in S_n} B(\tau)\]
where $B(\tau):=\{\sigma \in B_n \mid \sy(\sigma)=\tau\}$. So every $\sigma\in B_n$ is uniquely identified by the permutation $\tau=\sy(\sigma)$ and the choice of signs $J(\sigma):=\{\sigma(j) \mid j \in \nset(\sigma)\}.$ 

Clearly, for $\sigma \in B_n$ we have $\iset(\sigma)=\iset(\sy(\sigma))$, and thus $\stc(\sigma)=\stc(\sy(\sigma))$. So, for $\tau=\sy(\sigma)$
\[x^{\nstc(\sigma)}q^{\ell(\sigma)}=x^{\stc(\tau)}q^{\invv(\tau)}\prod_{j\in J(\sigma)} x q^j.\] The claim follows, as
\[\sum_{\sigma \in B_n} {x^{\nstc(\sigma)}q^{\ell(\sigma)}}=\sum_{\sigma \in B(\tau)}\sum_{\tau \in S_n}x^{\stc(\tau)}q^{\invv(\tau)}\sum_{J\subseteq [n]}\prod_{j\in J} x q^j=A_n(x,q)\prod_{i=1}^n {(1+xq^i)}.\qedhere\]
\end{proof}
\begin{cor}Let $n\in \N$. Then
\begin{equation}
	\sum_{r\geq 0} [r+1]^n _q x^r = \frac{\sum\limits_{\sigma \in B_n}{x^{\nstc(\sigma)}q^{\ell(\sigma)}}}{(1-x) \prod\limits_{i=1}^{n}{(1-x^2 q^{2i})}} \mbox{ in }\Z[q][[x]].\end{equation}
\end{cor}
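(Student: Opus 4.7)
The plan is short: the corollary follows by substituting the equidistribution of Theorem \ref{thm:B} into the generalised Carlitz identity of Adin, Brenti, and Roichman stated just above the definition of $\nstc$. That identity expresses $\sum_{r\geq 0}[r+1]^n_q x^r$ as a quotient whose numerator is $\sum_{\sigma\in B_n} x^{\ndes(\sigma)}q^{\nmaj(\sigma)}$ and whose denominator is $(1-x)\prod_{i=1}^{n}(1-x^2q^{2i})$. Theorem \ref{thm:B} asserts precisely that this numerator equals $\sum_{\sigma\in B_n}x^{\nstc(\sigma)}q^{\ell(\sigma)}$, so substitution yields the claim.

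Concretely, I would first recall the Adin--Brenti--Roichman identity in the form displayed earlier, then invoke Theorem \ref{thm:B} to rewrite the numerator, and finally note that the denominator $(1-x)\prod_{i=1}^{n}(1-x^2q^{2i})$ has constant term $1$ in $\Z[q][[x]]$, so it is a unit and the quotient is an honest element of $\Z[q][[x]]$. No further combinatorics is needed.

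There is essentially no obstacle here: the substantive combinatorial content, namely matching the joint distribution of $(\nstc,\ell)$ with $(\ndes,\nmaj)$ on $B_n$ by stratifying via $\sy$ and summing the sign contributions $\prod_{j\in J}xq^j$, has already been carried out in the proof of Theorem \ref{thm:B}. The corollary is the natural packaging of that result in the form of a generating-function identity analogous to MacMahon's and Carlitz's classical identities.
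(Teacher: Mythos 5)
Your proposal is correct and is exactly the argument the paper intends: the corollary is stated without proof immediately after Theorem \ref{thm:B} precisely because it follows by substituting that equidistribution into the Adin--Brenti--Roichman identity, as you do. Nothing further is needed.
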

\subsection{Eulerian companion for the length on $D_n$}
The even hyperoctahedral group $D_n$ is the subgroup of $B_n$ of signed permutations for which the negative statistic is even:
\[D_n:=\{\sigma \in B_n \mid \nneg(\sigma) \equiv 0 \pmod 2\}.\]
Also for $\sigma$ in $D_n$ the Coxeter length can be computed in terms of statistics:
\begin{equation}
\ell(\sigma)=\invv(\sigma)+\nsp(\sigma).
\end{equation}
The problem of finding an analogue, on the group $D_n$ of even signed permutations, was solved in \cite{Biagioli/03}, 
where type $D$  statistics $\des$ and $\maj$ were defined, as follows. For $\sigma \in D_n$
\begin{equation}
\ddes(\sigma)=\des(\sigma)+|\DNeg(\sigma)|\qquad
\dmaj(\sigma)=\maj(\sigma)-\sum_{i \in \DNeg(\sigma)} \sigma(i)
\end{equation}
where $\DNeg(\sigma):=\{i-1 \in [n]|\sigma(i)<-1\}$.
The following holds (cf. \cite[Theorem 3.4]{Biagioli/03}).
\begin{thm}
Let $n \in \N$. Then
\begin{equation}
\sum_{r\geq 0} [r+1]^n _q x^r = \frac{\sum\limits_{\sigma \in D_n}{x^{\ddes(\sigma)}q^{\dmaj(\sigma)}}}{(1-x)(1-xq^n) \prod\limits_{i=1}^{n-1}{(1-x^2 q^{2i})}} \mbox{ in }\Z[q][[x]].\end{equation}	
\end{thm}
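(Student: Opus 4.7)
My plan is to adapt the $P$-partition style proof of the analogous $B_n$ identity by Adin, Brenti and Roichman to the type-$D$ setting.

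First I would rewrite the left-hand side in a form amenable to bijective analysis. Using $[r+1]_q^n = \sum_{\mathbf{a} \in \{0,\ldots,r\}^n} q^{a_1+\cdots+a_n}$ and interchanging summations,
\[\sum_{r \geq 0} [r+1]_q^n\, x^r = \sum_{\mathbf{a} \in \mathbb{Z}_{\geq 0}^n} q^{|\mathbf{a}|}\sum_{r \geq \max_i a_i} x^r = \frac{1}{1-x}\sum_{\mathbf{a} \in \mathbb{Z}_{\geq 0}^n} x^{\max_i a_i}\, q^{|\mathbf{a}|},\]
so the theorem reduces to showing
\[\sum_{\mathbf{a} \in \mathbb{Z}_{\geq 0}^n} x^{\max_i a_i}\, q^{|\mathbf{a}|} = \frac{\sum_{\sigma \in D_n} x^{\ddes(\sigma)}\, q^{\dmaj(\sigma)}}{(1-xq^n)\prod_{i=1}^{n-1}(1-x^2 q^{2i})}.\]

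The core step is a type-$D$ standardization bijection: to each $\mathbf{a} \in \mathbb{Z}_{\geq 0}^n$ one associates a pair $(\sigma,\lambda)$, where $\sigma \in D_n$ records the "sorted order with signs" of $\mathbf{a}$ and $\lambda$ is a weakly decreasing residue vector of non-negative integers, in such a way that $x^{\max a_i}q^{|\mathbf{a}|}$ factors as $x^{\ddes(\sigma)}q^{\dmaj(\sigma)}\cdot W_\sigma(\lambda)$. Summing $W_\sigma(\lambda)$ over the admissible $\lambda$ should give the denominator independently of $\sigma$. The paired factors $(1-x^2 q^{2i})^{-1}$ for $i=1,\ldots,n-1$ correspond to coordinates paired up under the type-$D$ sign conventions (the "doubled" exponents match the even $D_n$ Weyl-group degrees $2,4,\ldots,2(n-1)$), while the isolated factor $(1-xq^n)^{-1}$ corresponds to the exceptional degree-$n$ invariant of $D_n$.

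The main obstacle is constructing this bijection correctly. Unlike in type $B$, where the $n$ sign choices decouple and each contributes a factor $(1+xq^i)$, the even-parity constraint $\nneg(\sigma) \equiv 0 \pmod 2$ couples the sign choices in $D_n$. The shift built into $\DNeg(\sigma) = \{i-1 \mid \sigma(i) < -1\}$ (with strict inequality $< -1$ rather than $\leq 0$) is designed precisely to reconcile that parity constraint with a clean $P$-partition statistic, and reproducing this index shift bijectively requires careful bookkeeping of how each $\mathbf{a}$ "chooses" its signed permutation. Once the bijection is in place, the identity follows by summing the geometric series indexed by $\lambda$ for each fixed $\sigma \in D_n$.
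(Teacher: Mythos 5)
This theorem is not proved in the paper at all: it is quoted from Biagioli \cite[Theorem 3.4]{Biagioli/03}, so there is no in-paper argument to compare against. Judged on its own terms, your proposal has a genuine gap: everything after the (correct) rewriting $\sum_{r\geq 0}[r+1]_q^n x^r=\frac{1}{1-x}\sum_{\mathbf{a}\in\Z_{\geq 0}^n}x^{\max_i a_i}q^{|\mathbf{a}|}$ is a description of what a proof would need, not a proof. The ``type-$D$ standardization bijection'' $\mathbf{a}\mapsto(\sigma,\lambda)$ is never constructed, and you yourself flag it as the main obstacle. Worse, as set up it cannot be the naive standardization: a vector $\mathbf{a}\in\Z_{\geq 0}^n$ with distinct entries standardizes to a single element of $S_n$, so the cone $\Z_{\geq 0}^n$ decomposes into $n!$ chambers, not the $2^{n-1}n!$ pieces a $D_n$-indexed decomposition would require. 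One must either enlarge the lattice to $\Z^n$ and fold by signs subject to the parity constraint, or abandon the bijective route; the heuristic about Weyl-group degrees $2,4,\ldots,2(n-1),n$ explains why the denominator is plausible but carries no proof content.

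There is a much shorter route, and it is the one implicitly available in the paper. The proof of the paper's Theorem \ref{thm:D} uses (via \cite[Lemma 3.3]{Biagioli/03}) the product formula
\[\sum_{\sigma\in D_n}x^{\ddes(\sigma)}q^{\dmaj(\sigma)}=\prod_{i=1}^{n-1}(1+xq^i)\,A_n(x,q),\]
where $A_n(x,q)$ is Carlitz's $q$-Eulerian polynomial. Combining this with the classical Carlitz identity $\sum_{r\geq 0}[r+1]_q^n x^r=A_n(x,q)/\prod_{i=0}^{n}(1-xq^i)$ and the factorization $1-x^2q^{2i}=(1-xq^i)(1+xq^i)$, the right-hand side of the stated identity collapses to $A_n(x,q)/\prod_{i=0}^{n}(1-xq^i)$ and you are done, with no new bijection needed. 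I would recommend either completing the folding bijection in full detail (essentially reproving Biagioli's Lemma 3.3) or switching to this algebraic reduction.
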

\begin{dfn}
Let $\sigma \in D_n$. We set
$$\dstc(\sigma):=\stc (\sigma)+|\DNeg(\sigma)|=\stc(\sigma)+\nneg(\sigma)+\epsilon(\sigma),$$
where $$\epsilon(\sigma)=\begin{cases}
-1 \mbox{ if }\sigma^{-1}(1)<0 \\
0\:\:\; \mbox{ otherwise }.
\end{cases}$$
\end{dfn}
We now show that the statistic just defined
constitutes an Eulerian partner for the length on $D_n$, that is, the following holds.
\begin{thm}\label{thm:D}
Let $n\in \N$. Then
\[\sum_{\sigma\in D_n}{x^{\dstc(\sigma)}q^{\ell(\sigma)}}=\sum_{\sigma\in D_n}{x^{\ddes(\sigma)}q^{\dmaj(\sigma)}}.\]
\end{thm}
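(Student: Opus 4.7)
The plan is to adapt the proof of Theorem~\ref{thm:B} to the type $D$ setting, using the decomposition $D_n = \bigcup_{\tau \in S_n} D(\tau)$, where $D(\tau) := \{\sigma \in D_n \mid \pi(\sigma) = \tau\}$ consists of all signings of $\tau$ with an even number of negative entries. As in the type $B$ case, any $\sigma \in D(\tau)$ is uniquely specified by $\tau$ together with the set $J(\sigma) := \{|\sigma(j)| \mid j \in \nset(\sigma)\} \subseteq [n]$ of absolute values of the negative entries, which must have even cardinality. The aim is to show that both sides equal $A_n(x,q)\prod_{i=1}^{n-1}(1+xq^i)$.

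The first step is to rewrite the summand in the factored form
\[
x^{\dstc(\sigma)} q^{\ell(\sigma)} = x^{\stc(\tau)} q^{\invv(\tau)} \cdot x^{\epsilon(\sigma)} \prod_{j \in J(\sigma)} x q^{j-1}.
\]
Here I would use $\stc(\sigma) = \stc(\tau)$ (as noted in the proof of Theorem~\ref{thm:B}, since $\stc$ depends only on the inversion set), together with $\ell(\sigma) = \invv(\sigma) + \nsp(\sigma)$ and the identity $\nsp(\sigma) + \nneg(\sigma) = -\sum_{i\in \nset(\sigma)} \sigma(i) = \sum_{j \in J(\sigma)} j$, whence $\nsp(\sigma) = \sum_{j \in J(\sigma)} (j-1) + |J(\sigma)| - |J(\sigma)| = \sum_{j \in J(\sigma)} j - |J(\sigma)|$. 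Combined with $\dstc(\sigma) = \stc(\tau) + |J(\sigma)| + \epsilon(\sigma)$, the claimed factorisation follows.

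The second step is the summation over signings. The key observation is that $\epsilon(\sigma) = -1$ precisely when $1 \in J(\sigma)$: indeed, $\sigma^{-1}(1) < 0$ is equivalent to $-1$ appearing as some entry $\sigma_k$, i.e.\ $1 = |\sigma_k|$. Hence the factor $x^{\epsilon(\sigma)} \cdot (xq^0) = 1$ whenever $1 \in J$, exactly matching the trivial contribution of $1$ when $1 \notin J$. Writing $J = J' \sqcup (\{1\}\cap J)$ with $J' \subseteq [2,n]$, the parity condition $|J| \equiv 0 \pmod 2$ pairs each $J' \subseteq [2,n]$ with exactly one choice (include $1$ or not), so that
\[
\sum_{\substack{J \subseteq [n] \\ |J| \text{ even}}} x^{\epsilon(J)} \prod_{j \in J} x q^{j-1} \;=\; \sum_{J' \subseteq [2,n]} \prod_{j \in J'} xq^{j-1} \;=\; \prod_{i=1}^{n-1}(1 + x q^i).
\]
Summing the factored expression over $\tau \in S_n$ and all admissible signings yields $\sum_{\sigma \in D_n} x^{\dstc(\sigma)} q^{\ell(\sigma)} = A_n(x,q)\prod_{i=1}^{n-1}(1+xq^i)$.

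To conclude, I would match this with the right-hand side: comparing the Biagioli identity with the Adin--Brenti--Roichman identity, their denominators differ by a factor $1+xq^n$; by Theorem~\ref{thm:B}, the type $B$ numerator equals $A_n(x,q)\prod_{i=1}^{n}(1+xq^i)$, hence the type $D$ numerator must equal $A_n(x,q)\prod_{i=1}^{n-1}(1+xq^i)$, giving the equidistribution. The main subtlety is the bookkeeping of the $\epsilon$ correction against the evenness constraint on $|J|$: it is essential that the distinguished index is $1$, so that the weight $xq^{j-1}\big|_{j=1} = x$ cancels exactly the $x^{-1}$ coming from $\epsilon = -1$. Any other choice of correction term would spoil the cancellation, and this is precisely what guides the definition of $\dstc$.
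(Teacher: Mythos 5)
Your proof is correct, and it is worth noting where it coincides with and where it departs from the paper's argument. The skeleton is the same: both decompose $D_n$ over the underlying pattern $\tau=\sy(\sigma)\in S_n$, use $\iset(\sigma)=\iset(\tau)$ to pull out $x^{\stc(\tau)}q^{\invv(\tau)}$, and reduce to showing that the sign-part generating function equals $\prod_{i=1}^{n-1}(1+xq^i)$; indeed your $D(\tau)$ is exactly the paper's coset $\{\alpha\tau \mid \alpha\in T_n\}$, with $J(\sigma)=J(\alpha)$. The genuine difference is that the paper delegates the sign-part identity $\sum_{\alpha\in T_n}x^{\nneg(\alpha)+\epsilon(\alpha)}q^{\nsp(\alpha)}=\prod_{i=1}^{n-1}(1+xq^i)$ to Biagioli's Lemma~3.3, whereas you prove it from scratch: you convert $\nsp$ into $\sum_{j\in J}(j-1)$ via the two expressions for the type~$B$ length, observe that $\epsilon(\sigma)=-1$ exactly when $1\in J(\sigma)$ so that the weight $x^{\epsilon}\cdot xq^{0}$ attached to $j=1$ is trivial, and use this to absorb the evenness constraint on $|J|$ by the free choice of whether $1\in J$. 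This yields a self-contained and genuinely elementary proof of the key lemma, and it also explains \emph{why} the $\epsilon$ correction in the definition of $\dstc$ must be pinned to the value $1$. For the right-hand side, the paper cites the factorisation $\sum_{\sigma\in D_n}x^{\ddes(\sigma)}q^{\dmaj(\sigma)}=A_n(x,q)\prod_{i=1}^{n-1}(1+xq^i)$ directly, while you recover it by dividing Biagioli's Carlitz-type identity by the Adin--Brenti--Roichman one and invoking Theorem~\ref{thm:B}; this is valid (the denominators differ exactly by $1+xq^n$ after cancelling $1-xq^n$ against $1-x^2q^{2n}$) but still rests on the two quoted theorems, so it is an alternative bookkeeping rather than a logical saving. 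All the individual steps check out, including $\invv(\sigma)=\invv(\tau)$ on $D(\tau)$ and the reindexing $\prod_{j=2}^{n}(1+xq^{j-1})=\prod_{i=1}^{n-1}(1+xq^{i})$.
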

\begin{proof}
We use, as in \cite{Biagioli/03} the following decomposition of $D_n$. Let \begin{equation}\label{eq:asc}
T_n:=\{\alpha \in D_n \mid \des(\alpha)=0\}=\{\alpha\in D_n \mid \iset(\alpha)=\emptyset\}
\end{equation}
then $D_n$ can be rewritten as the following disjoint union:
\begin{equation}\label{eq:ddec}
D_n=\bigcup_{\tau \in S_n}\{\alpha \tau \mid \alpha \in T_n\}.
\end{equation}
For $\alpha \in T_n$ and $\tau \in S_n$ one has:
\begin{equation*}
\ell(\alpha \tau)=\ell(\alpha)+\ell(\tau)=\nsp(\alpha)+\invv(\tau),\quad  \nsp(\alpha \tau)=\nsp(\alpha),\quad \dstc(\alpha \tau)=\stc(\tau)+\nneg(\alpha)+\epsilon(\sigma),
\end{equation*}
the last one follows from the second equality in \eqref{eq:asc}. We thus have
\begin{align*}
\sum_{\sigma\in D_n}{x^{\dstc(\sigma)}q^{\ell(\sigma)}}&=\sum_{\alpha \in T_n}\sum_{\tau \in S_n}x^{\stc(\tau)+\nneg(\alpha)+\epsilon(\alpha)}q^{\ell(\alpha)+\ell(\tau)}\\
&=\sum_{\alpha \in T_n}x^{\nneg(\alpha)+\epsilon(\alpha)}q^{\nsp(\alpha)} \sum_{\tau \in S_n}x^{\stc(\tau)}q^{\invv(\tau)} \\
&=\prod_{i=1}^{n-1}(1+x q^i)  A_n(x,q)\end{align*}
for the last equality see \cite[Lemma 3.3]{Biagioli/03}.
The result follows, as
\[\sum_{\sigma\in D_n}{x^{\ddes(\sigma)}q^{\dmaj(\sigma)}}=\prod_{i=1}^{n-1}(1+x q^i)  A_n(x,q).\qedhere\]
\end{proof}
\begin{cor}Let $n \in \N$. Then
	\begin{equation}
	\sum_{r\geq 0} [r+1]^n _q x^r = \frac{\sum\limits_{\sigma \in D_n}{x^{\dstc(\sigma)}q^{\ell(\sigma)}}}{(1-x)(1-xq^n) \prod\limits_{i=1}^{n-1}{(1-x^2 q^{2i})}} \mbox{ in }\Z[q][[x]].\end{equation}
\end{cor}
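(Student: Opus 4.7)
The plan is to deduce the corollary immediately from Theorem \ref{thm:D} combined with Biagioli's generalised Carlitz identity stated just above it. Since the right-hand side of Biagioli's identity differs from the claimed identity only in the numerator, which is $\sum_{\sigma \in D_n} x^{\ddes(\sigma)} q^{\dmaj(\sigma)}$ instead of $\sum_{\sigma \in D_n} x^{\dstc(\sigma)} q^{\ell(\sigma)}$, it suffices to invoke the equidistribution established in Theorem~\ref{thm:D}.

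Concretely, I would write: by Theorem~\ref{thm:D} we have the equality of generating functions
\begin{equation*}
\sum_{\sigma \in D_n} x^{\dstc(\sigma)} q^{\ell(\sigma)} \;=\; \sum_{\sigma \in D_n} x^{\ddes(\sigma)} q^{\dmaj(\sigma)}
\end{equation*}
in $\Z[x,q]$. Substituting this into the numerator of the preceding theorem (Biagioli's identity) yields the claimed identity in $\Z[q][[x]]$.

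There is no serious obstacle here: all of the real content — the definition of $\dstc$ and the nontrivial identity $\sum_{\sigma \in D_n} x^{\dstc(\sigma)} q^{\ell(\sigma)} = \prod_{i=1}^{n-1}(1+xq^i)A_n(x,q)$ — has already been carried out in the proof of Theorem~\ref{thm:D}, and Biagioli's generalised Carlitz identity is quoted from \cite{Biagioli/03}. The corollary is therefore a one-line consequence, fully analogous to how the corollary in the hyperoctahedral subsection follows from Theorem~\ref{thm:B}.
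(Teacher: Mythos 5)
Your proposal is correct and matches the paper's (implicit) derivation exactly: the corollary is stated without proof precisely because it follows by substituting the equidistribution of Theorem~\ref{thm:D} into Biagioli's generalised Carlitz identity, just as you describe. Nothing further is needed.
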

\begin{acknowledgements}
 The author is partially supported
  by the German-Israeli Foundation for Scientific Research and
  Development, through grant no.~1246.
\end{acknowledgements}

\bibliographystyle{amsplain}

\end{document}